\documentclass{aims}
\usepackage{amsmath}
  \usepackage{paralist}
  \usepackage{graphics,dsfont} 
  \usepackage{epsfig} 
 \usepackage[colorlinks=true]{hyperref}
\hypersetup{urlcolor=blue, citecolor=red}

  \textheight=8.2 true in
   \textwidth=5.0 true in
    \topmargin 30pt
     \setcounter{page}{1}



\newtheorem{theorem}{Theorem}[section]

\newtheorem{lemma}[theorem]{Lemma}

\theoremstyle{definition}
\newtheorem{definition}[theorem]{Definition}

\newcommand{\eps}{\varepsilon}

\newcommand{\R}{\mathbb{R}}
\newcommand{\Z}{\mathbb{Z}}

\newcommand{\seq}[1]{\left\{#1\right\}}

\newcommand{\Dx}{{\Delta x}}

\newcommand{\norm}[1]{\left\|#1\right\|}
\newcommand{\abs}[1]{\left|#1\right|}

\newcommand{\Dmx}{D_-}

\newcommand{\weakto}{\rightharpoonup}
\newcommand{\weakstarto}{\overset{\star}{\weakto}}
\newcommand{\CL}{\mathcal{L}}
\newcommand{\N}{\mathbb{N}}
\newcommand{\CMloc}{\mathcal{M}_{\mathrm{loc}}}
\newcommand{\Dm}{D_-}
\newcommand{\jint}{\int_{x_{j-1/2}}^{x_{j+1/2}}\!\!}
\newcommand{\loc}{{\mathrm{loc}}}

\newcommand{\Pt}{\Pi_T}

\title[Keyfitz-Kranzer]{Convergence of a finite difference scheme for 
  $2 \times 2$ Keyfitz-Kranzer system} 

\author[ Ujjwal Koley and Nils Henrik Risebro]{}

\subjclass{Primary: 35D30, 35L40; Secondary: 35Q35.}
 \keywords{Keyfitz-Kranzer system, finite difference scheme, convergence, existence, compensated compactness}

 \email{toujjwal@gmail.com}
 \email{nilshr@math.uio.no}


\begin{document}
\maketitle

\centerline{\scshape Ujjwal Koley}
\medskip
{\footnotesize
 \centerline{Institut f\"{u}r Mathematik,}
   \centerline{Julius-Maximilians-Universit\"{a}t W\"{u}rzburg,}
   \centerline{Campus Hubland Nord, Emil-Fischer-Straße 30,}
   \centerline{97074, W\"{u}rzburg, Germany.}
} 

\medskip

\centerline{\scshape Nils Henrik Risebro}
\medskip
{\footnotesize
 \centerline{ Centre of Mathematics for Applications (CMA)}
   \centerline{University of Oslo, P.O. Box 1053,}
   \centerline{Blindern, N--0316, Oslo, Norway}
}

\bigskip

\centerline{(Communicated by the associate editor name)}

\begin{abstract}
 We are concerned with the convergence of a numerical scheme for the
  initial value problem associated to the $2 \times 2$ Keyfitz-Kranzer system of
  equations. In this paper we prove the convergence of a finite difference scheme to a weak solution.
\end{abstract}

\section{Introduction}

In this paper, we are concerned with a symmetrically hyperbolic system
of two equations
\begin{equation}
  \label{eq:system}
  \begin{cases}
    u_t + (u \phi(r))_x = 0,  & \quad (x,t) \in \Pt \\
    v_t + (v \phi(r))_x = 0, & \quad (x,t) \in \Pt ,
  \end{cases}
\end{equation}
with initial data
\begin{equation}
  \label{eq:main_initial}
  \left( u(x,0), v(x,0) \right) = (u_0(x), v_0(x)), \quad x \in \R,
\end{equation}
where $r(x,t) = \sqrt{u^2(x,t) + v^2(x,t)}$, $\Pt=\R\times (0,T)$ with $T>0$ fixed, and $u,\, v:\Pt\to \R$ are the
unknown functions. Regarding the function $\phi$, the basic assumption is that $\phi: \R \rightarrow \R$ is a given ( sufficiently smooth ) scalar function (see Section ~\ref{sec:math} for precise assumptions). Systems of this type was first considered in \cite{KeyfitzKranzer} and later on by several other authors, as a prototypical example of a non-strict hyperbolic system. Note that \eqref{eq:system} is a non-strict hyperbolic system with first characteristic field is always linearly degenerate and the second characteristic field is either genuinely nonlinear or linearly
degenerate, depending on the behavior of $\phi$.

Due to the nonlinearity, discontinuities in the solution may appear independently of the smoothness of the initial data and weak solutions must be sought. 
A weak solution is defined as follows:
\begin{definition}
  We call the pair $(u,v)$ a weak solution of the Cauchy problem \eqref{eq:system}--\eqref{eq:main_initial} if 
\begin{itemize}
 \item [(a)] $u$ and $v$ are in $ L^{\infty}(\Pt)$.
  \item [(b)] $u$ and $v$ satisfy \eqref{eq:system} in the sense of distributions on $\Pt$, i.e., the following identities,
    \begin{equation}
\begin{aligned}
    \label{eq:weaksys_2}
    \iint_{\Pt} \!\! u \psi_t + u \phi(r) \psi_x \,dxdt + \int_\R u_0 \psi(x,0)\,dx & = 0, \\
    \iint_{\Pt} \!\! v \psi_t + v \phi(r) \psi_x \,dxdt + \int_\R v_0 \psi(x,0)\,dx & = 0,
\end{aligned}
  \end{equation}
hold for each smooth test function $\psi$ with compact support in $\Pt$.
  \end{itemize}
\end{definition}
In this paper, we propose a upwind semi discrete finite difference scheme and prove the convergence of the approximate solution to the weak solution of \eqref{eq:system}. In what follows, we first prove the strong convergence of approximate solution $r_\Dx = \sqrt{u_\Dx^2 + v_\Dx^2}$ using compensated compactness argument \cite{Chen,Lu}. Next, we prove a BV estimate of $ \varphi_\Dx := \tan^{-1}(\frac{u_\Dx}{v_\Dx})$. Then Helly's theorem combined with the strong convergence of $r_{\Dx}$ gives the required strong convergence of $u_{\Dx}$ and $v_{\Dx}$.

\section{Mathematical Framework}
\label{sec:math}
In this section we present some mathematical tools that we shall use in the analysis. To start with the basic assumptions on the initial data and the funtion $\phi(r)$, we assume that $\phi$ is a twice differentiable function $\phi:[0,\infty)\to [0,\infty)$ so that 
 \begin{itemize}
 \item [(a)]$\phi(r)>0$ and $ \phi'(r)\ge 0$ for all relevant $r$; 
 \item [(b)]$\phi(r), \phi'(r)$ and $\phi''(r)$ are bounded for all relevant $r$; 
 \item [(c)]$ \sqrt{u_0^2 + v_0^2} \in L^1(\R) \cap L^{\infty}(\R)$. 
 \end{itemize}
Note that we shall assume above assumptions throughout the paper. We also use the following compensated compactness result.
\begin{theorem}
  \label{thm:compcomp} Let $\Omega\subset \R\times \R^+$ be a bounded
  open set, and assume that $\seq{u^\eps}$ is a sequence of uniformly
  bounded functions such that $\abs{u^\eps}\le M$ for all $\eps$. Also
  assume that $f:[-M,M] \to \R$ is a twice differentiable
  function. Let $u^\eps \weakstarto u$ and $f(u^\eps)\weakstarto v$,
  and set
  \begin{equation}
    \begin{aligned}
      \left(\eta_1(s),q_1(s)\right) &= \left(s-k,f(s)-f(k)\right),\\
      \left(\eta_2(s),q_2(s)\right) &= \left(f(s)-f(k), \int_k^s
        (f'(\theta))^2\,d\theta \right),
    \end{aligned}\label{eq:entropies}
  \end{equation}
  where $k$ is an arbitrary constant. If
  \begin{equation*}
    \eta_i(u^\eps)_t + q_i(u^\eps)_x \ \text{ is in a compact set of
      $H^{-1}_{\mathrm{loc}}(\Omega)$ for $i=1$, $2$,}
  \end{equation*}
  then
  \begin{itemize}
  \item [(a)] $v=f(u)$, a.e.~$(x,t)$,
  \item [(b)] $u^\eps \to u$, a.e.~$(x,t)$ if $\mathrm{meas}\seq{u\,|\,
      f''(u)=0}=0$.
  \end{itemize}
\end{theorem}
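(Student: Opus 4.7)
My approach follows the classical Tartar--Murat--DiPerna compensated compactness scheme. The first step is to pass to a subsequence (not relabeled) and invoke the fundamental theorem on Young measures: since $\seq{u^\eps}$ is uniformly bounded by $M$ in $L^\infty(\Omega)$, there exists a weakly-$*$ measurable family of probability measures $\nu_{x,t}$ supported in $[-M,M]$ such that $g(u^\eps) \weakstarto \langle \nu_{x,t}, g\rangle$ in $L^\infty(\Omega)$ for every $g\in C([-M,M])$. In particular $u(x,t) = \langle \nu_{x,t}, \mathrm{id}\rangle$ and $v(x,t) = \langle \nu_{x,t}, f\rangle$, so part~(a) amounts to showing that $\langle \nu_{x,t}, f\rangle = f(\langle \nu_{x,t}, \mathrm{id}\rangle)$ almost everywhere.

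Next I would apply the Murat--Tartar div-curl lemma to the two vector fields $U^\eps := (q_1(u^\eps),\eta_1(u^\eps))$ and $V^\eps := (-\eta_2(u^\eps), q_2(u^\eps))$ on $\Omega$ with coordinates $(x,t)$. The hypothesis provides exactly the $H^{-1}_{\loc}$-compactness of $\mathrm{div}\,U^\eps = \partial_x q_1(u^\eps) + \partial_t \eta_1(u^\eps)$ and $\mathrm{curl}\,V^\eps = \partial_x q_2(u^\eps) + \partial_t\eta_2(u^\eps)$. The lemma then yields $U^\eps\cdot V^\eps \to \overline U\cdot \overline V$ distributionally, which in Young-measure language reads, at almost every $(x,t)$,
\[
\langle \nu, \eta_1 q_2 - \eta_2 q_1\rangle \;=\; \langle \nu,\eta_1\rangle\langle \nu, q_2\rangle - \langle \nu,\eta_2\rangle\langle \nu, q_1\rangle.
\]

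The decisive step is to specialize the arbitrary constant to $k = u(x,t)$, so that $\langle \nu, \eta_1\rangle = \langle \nu, \mathrm{id} - u\rangle = 0$. Since $\eta_2 = q_1$, the right-hand side collapses to $-\langle \nu, q_1\rangle^2 = -(v-f(u))^2$. On the left-hand side I would use the pointwise Cauchy--Schwarz estimate
\[
(f(s)-f(k))^2 \;\le\; (s-k)\int_k^s (f'(\theta))^2\,d\theta,
\]
applied to $f(s)-f(k) = \int_k^s f'(\theta)\,d\theta$; this holds in both regimes $s>k$ and $s<k$ because the two factors on the right share a common sign. Integrating against $\nu$ makes the left-hand side of the commutation relation nonnegative and the right-hand side non-positive, so both vanish, forcing $v(x,t) = f(u(x,t))$ and establishing (a).

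For part (b), the vanishing of the integrand for $\nu_{x,t}$-a.e.~$s$ is the equality case of Cauchy--Schwarz, which requires $f'$ to be constant, hence $f''\equiv 0$, on the closed interval between $u(x,t)$ and $s$. Under the hypothesis $\mathrm{meas}\{r : f''(r)=0\} = 0$, no interval of positive length sits inside this set, so $\supp\nu_{x,t} = \{u(x,t)\}$ for almost every $(x,t)$; a Dirac Young measure is equivalent to a.e.~convergence of the extracted subsequence, which gives (b). I expect the main analytic obstacle to be the pointwise Cauchy--Schwarz comparison, where one must carefully track signs in the two regimes $s\gtrless k$ so that nonnegativity of the integrand is unambiguous; the div-curl step is essentially a direct citation once the entropy--entropy-flux relation $q_i'=\eta_i' f'$ is verified for the two pairs in \eqref{eq:entropies}.
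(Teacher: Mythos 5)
The paper does not prove Theorem~\ref{thm:compcomp} itself; it simply cites Lu's monograph, and your argument is precisely the standard Tartar--Murat reduction contained there: Young measures, the div-curl commutation identity for the two entropy pairs, the choice $k=u(x,t)$, and the pointwise Cauchy--Schwarz inequality $(f(s)-f(k))^2\le (s-k)\int_k^s (f'(\theta))^2\,d\theta$ whose equality case forces $\nu_{x,t}$ to be a Dirac mass under the nondegeneracy assumption. This is correct and is essentially the same proof as in the cited reference; the only point worth making explicit is the routine justification for substituting the measurable function $k=u(x,t)$ into a commutation relation derived for fixed $k$ (e.g.\ via a countable dense set of values of $k$ and continuity in $k$).
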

For a proof of this theorem, see the monograph of Lu \cite{Lu}. The following compactness interpolation result (known as Murat's lemma \cite{Murat}) is useful in obtaining the $H^{-1}_{\loc}$ compactness needed in Theorem ~\ref{thm:compcomp}.
\begin{lemma}
  \label{lem:Murat} 
  Let $\Omega$ be a bounded open subset of $\R^2$.  Suppose
  that the sequence $\seq{\CL_\eps}_{\eps>0}$ of distributions is
  bounded in $W^{-1,\infty}(\Omega)$.  Suppose also that
  $$
  \CL_\eps=\CL_{1,\eps} + \CL_{2,\eps},
  $$
  where $\seq{\CL_{1,\eps}}_{\eps>0}$ is in a compact subset of
  $H^{-1}(\Omega)$ and $\seq{\CL_{2,\eps}}_{\eps>0}$ is in a bounded
  subset of $\CMloc(\Omega)$.  Then $\seq{\CL_\eps}_{\eps>0}$ is in a
  compact subset of $H^{-1}_{\mathrm{loc}}(\Omega)$.
\end{lemma}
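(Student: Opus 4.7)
The plan is a two-step argument: first use the two-dimensional geometry to replace the measure piece by a negative-Sobolev norm, and then interpolate against the uniform $W^{-1,\infty}$ bound to reach $H^{-1}_{\loc}$.

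For the first step, I would exploit the fact that $\Omega\subset\R^2$. Rellich's theorem gives, for any $q>2$ and any open $U$ with $\overline{U}\subset\Omega$, the compact embedding $W^{1,q}_0(U)\hookrightarrow C_0(U)$. Dualising yields the compact embedding of bounded subsets of $\CMloc(\Omega)$ into $W^{-1,p}_{\loc}(\Omega)$ for every $p\in[1,2)$, where $p=q/(q-1)$. Fix one such $p\in(1,2)$; then $\{\CL_{2,\eps}\}$ is precompact in $W^{-1,p}_{\loc}(\Omega)$. Since $H^{-1}(\Omega)$ embeds continuously into $W^{-1,p}_{\loc}(\Omega)$ for this $p$, the hypothesis on $\{\CL_{1,\eps}\}$ gives the same conclusion, and hence $\{\CL_\eps\}$ itself is precompact in $W^{-1,p}_{\loc}(\Omega)$. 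Extract a subsequence, still denoted $\{\CL_\eps\}$, that is Cauchy in $W^{-1,p}(K)$ for every compact $K\subset\Omega$.

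The second step is interpolation. The key input is that for any distribution $f$ lying in both $W^{-1,p}(K)$ and $W^{-1,\infty}(K)$ one has
\begin{equation*}
  \norm{f}_{H^{-1}(K)} \le C\, \norm{f}_{W^{-1,p}(K)}^{\theta}\, \norm{f}_{W^{-1,\infty}(K)}^{1-\theta}, \qquad \theta = p/2 \in (0,1).
\end{equation*}
Applying this to the differences $\CL_{\eps_k}-\CL_{\eps_j}$ of the extracted subsequence, the first factor tends to zero by Step~1, while the second stays uniformly bounded by the global $W^{-1,\infty}$ hypothesis. Hence the subsequence is Cauchy in $H^{-1}(K)$ for every compact $K\subset\Omega$, which is exactly the asserted compactness in $H^{-1}_{\loc}(\Omega)$.

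The main obstacle is the interpolation inequality of Step~2. I would prove it by duality: test $f$ against $\test\in H^1_0(K)$ and split $\test$ into a $W^{1,p'}$ piece and a $W^{1,1}$ piece by truncating $\abs{\nabla\test}$ at some level $\lambda$. Pairing each piece with $f$ gives an estimate of the form $\langle f,\test\rangle \le \norm{f}_{W^{-1,p}(K)}\,A(\lambda) + \norm{f}_{W^{-1,\infty}(K)}\,B(\lambda)$, and optimizing in $\lambda$ produces the stated bound. Alternatively one can invoke the real interpolation identity between negative Sobolev spaces directly. Once this one-line estimate is in hand, the remainder of the proof is routine bookkeeping.
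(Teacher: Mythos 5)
The paper gives no proof of this lemma at all --- it is stated and attributed to Murat's paper and then used as a black box --- so there is no internal argument to compare against. Your write-up is essentially the standard proof of the compactness--interpolation lemma, and its architecture is correct: the compact embedding $W^{1,q}_0(U)\hookrightarrow C_0(U)$ for $q>2$ (this is where $\Omega\subset\R^2$ enters), its dual statement that bounded sets of measures are precompact in $W^{-1,p}(U)$ for $p=q/(q-1)<2$ (Schauder's theorem on adjoints of compact operators), the continuous embedding $H^{-1}\hookrightarrow W^{-1,p}_{\loc}$ on bounded sets, and the interpolation of the resulting $W^{-1,p}$-precompactness against the uniform $W^{-1,\infty}$ bound with exponent $\theta=p/2$. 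The exponent is right, and the diagonal extraction over an exhaustion of $\Omega$ by compacts is routine.

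The one step you should not wave at is your duality proof of the interpolation inequality. Truncating $\abs{\nabla\test}$ at height $\lambda$ does \emph{not} decompose $\test$ into a $W^{1,p'}_0$ piece plus a $W^{1,1}_0$ piece: the truncated field $\nabla\test\,\mathds{1}_{\{\abs{\nabla\test}\le\lambda\}}$ is in general not a gradient, so the two ``pieces'' are not Sobolev functions and cannot be paired with $f$ separately. Making that idea work requires the Lipschitz-truncation technique (truncate where the maximal function of $\nabla\test$ is large, which genuinely yields a Lipschitz function agreeing with $\test$ off a small set), which is considerably more than a one-line estimate. The cleaner and standard fix is to avoid the $L^\infty$ endpoint entirely: since each compact $K$ has finite measure, the uniform $W^{-1,\infty}(K)$ bound gives a uniform $W^{-1,r}(K)$ bound for every finite $r$, and for $1<p<2<r<\infty$ the inequality $\norm{f}_{W^{-1,2}}\le C\,\norm{f}_{W^{-1,p}}^{\theta}\norm{f}_{W^{-1,r}}^{1-\theta}$ with $\tfrac12=\tfrac{\theta}{p}+\tfrac{1-\theta}{r}$ follows from the identification of $W^{-1,s}$ with the Bessel-potential space for $1<s<\infty$ and H\"older's inequality applied to $(I-\Delta)^{-1/2}f$ (after localizing with a cutoff). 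Your ``alternatively, invoke real interpolation'' remark has the same endpoint wrinkle at $s=\infty$, so it too should be routed through a finite $r$. With that repair the argument is complete.
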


\section{Semi Discrete Finite Difference Scheme}
\label{sec:semi}
We start by introducing the necessary notations. Given $\Dx>0$, we set $x_j=j\Dx$ and $x_{j\pm1/2} = x_j \pm \Dx/2$ for $j\in \Z$ and for any function $u = u(x)$, we define $u_j = u(x_j)$. Let $D_{\pm}$ denote the discrete forward and backward differences, i.e.,
\begin{equation*}
  D_{\pm}u_j = \mp\frac{u_j - u_{j\pm 1}}{\Dx}. 
\end{equation*}
To a sequence $\seq{w_j}_{j\in \Z}$ we associate the function $w_\Dx$ defined by 
\begin{equation*}
  w_{\Dx}(x)= \sum_{j \in \Z} w_j \mathds{1}_{I_j}(x),
\end{equation*}
where $\mathds{1}_{A}$ denotes the characteristic function of the set $A$. 
We will use following standard notations:
\begin{equation*}
\begin{aligned}
 & \norm{u_{\Dx}}_{L^{\infty}(\R)} = \sup_{j \in \Z} \abs{u_j}, \quad  \norm{u_\Dx}_{L^1(\R)} = \Dx \sum_{j\in\Z} \abs{u_j}, \\
 & \norm{u_\Dx}_{L^2(\R)} = \left( \Dx \sum_{j\in\Z} \abs{u_j}^2 \right)^{1/2}, \quad \abs{u_{\Dx}}_{BV(\R)} = \sum_{j \in \Z} \abs{u_j - u_{j-1}}.
\end{aligned}
\end{equation*}

Now let $\seq{u_j(t)}_{j\in\Z}$ and $\seq{v_j(t)}_{j\in\Z}$ satisfy
the (infinite) system of ordinary differential equations,
\begin{equation}    \label{eq:discrete}
  \begin{cases}
    u_j' + \Dmx \left(\phi(r_j) u_j\right)=0,\\ 
    v_j' + \Dmx \left(\phi(r_j) v_j\right)=0,
  \end{cases}
\end{equation}
with initial values
\begin{equation}\label{eq:discrete_init}
  u_j(0)=\frac{1}{\Dx}\int_{x_{j-1/2}}^{x_{j+1/2}}\! u_0(x)\,dx\
  \text{ and }\ 
  v_j(0)=\frac{1}{\Dx}\int_{x_{j-1/2}}^{x_{j+1/2}}\! v_0(x)\,dx.
\end{equation}
Here $r_j = \sqrt{u_j^2 + v_j^2}$. It is natural to view \eqref{eq:discrete}
as an ordinary differential equation in $L^2(\R)\times L^2(\R)$. It is easy to show that the right hand side of \eqref{eq:discrete}
is Lipschitz continuous in $L^2(\R)\times L^2(\R)$, which essentially gives the local (in time) existence and uniqueness of differentiable
solutions. 
The next lemma shows that the $L^2$ norm remains bounded if it
is bounded initially, so  the solution can be defined up to any
time.
\begin{lemma}
  \label{lem:rl1}
  Let $\seq{u_j(t)}$, $\seq{v_j(t)}$ be defined by
  \eqref{eq:discrete}, and let $r_j = \sqrt{u_j^2+v_j^2}$. Then
  \begin{equation*}
\begin{aligned}
    \norm{r_\Dx(t)}_{L^1(\R)} & \le \norm{r_\Dx(0)}_{L^1(\R)}, \\
    \norm{r_\Dx(t)}_{L^2(\R)} & \le \norm{r_\Dx(0)}_{L^2(\R)}.
\end{aligned}
  \end{equation*}
  Furthermore, there is a constant $C$, independent of $\Dx$ and $T$,
  such that 
  \begin{equation*}
    \int_0^T \left( \sum_j \int_{r_{j-1}}^{r_j}\!\!\! \left(r_j^2-s^2\right)\phi'(s)\,ds  
    + \Dx \sum_{j} \phi_{j-1}
    \Dx \left(\left(D_+u_j\right)^2+\left(D_+v_j\right)^2\right) \right)\,dt \le C.  
  \end{equation*}
\end{lemma}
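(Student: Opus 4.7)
\textbf{Proof plan for Lemma \ref{lem:rl1}.} The strategy is to differentiate the discrete $L^1$ and $L^2$ norms of $r_\Dx$ along the ODE system \eqref{eq:discrete} and extract a favorable sign from the upwind flux. Two separate energy computations give the two bounds; the dissipation picked up in the $L^2$ computation yields the integrated estimate.

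For the $L^1$ bound, my plan is to multiply the first equation of \eqref{eq:discrete} by $u_j/r_j$ and the second by $v_j/r_j$, and add. Since $r_j r_j' = u_j u_j' + v_j v_j'$ at points where $r_j>0$, expanding the discrete fluxes yields
\[
  r_j' = -\frac{1}{\Dx r_j}\bigl[\phi_j r_j^2 - \phi_{j-1}(u_j u_{j-1} + v_j v_{j-1})\bigr].
\]
Cauchy--Schwarz gives $u_j u_{j-1} + v_j v_{j-1} \le r_j r_{j-1}$, so $r_j' + \Dmx\!\bigl(\phi(r_j) r_j\bigr) \le 0$. Multiplying by $\Dx$ and summing over $j$ collapses the flux terms by telescoping (the boundary contributions vanish because $(u_j,v_j)\in \ell^2$ implies $r_j\to 0$ as $|j|\to\infty$), giving $\tfrac{d}{dt}\norm{r_\Dx(t)}_{L^1(\R)}\le 0$. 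The degenerate case $r_j(t)=0$ I would handle by the standard regularization $r_j\rightsquigarrow\sqrt{r_j^2+\eps^2}$, performing the estimate and letting $\eps\downarrow 0$.

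For the $L^2$ bound and the integrated dissipation estimate, I would instead multiply the first equation by $u_j$, the second by $v_j$, add, sum in $j$ and multiply by $\Dx$, obtaining
\[
  \tfrac{1}{2}\tfrac{d}{dt}\norm{r_\Dx}_{L^2(\R)}^2 = -\sum_j \bigl[\phi_j r_j^2 - \phi_{j-1}(u_j u_{j-1} + v_j v_{j-1})\bigr].
\]
After the re-indexing $\sum_j \phi_j r_j^2 = \sum_j \phi_{j-1} r_{j-1}^2$ and the polarization identity
\[
  u_j u_{j-1} + v_j v_{j-1} = \tfrac{1}{2}(r_j^2+r_{j-1}^2) - \tfrac{\Dx^2}{2}\bigl((\Dp u_{j-1})^2+(\Dp v_{j-1})^2\bigr),
\]
the right-hand side splits into a ``potential'' piece $\tfrac{1}{2}\sum_j \phi_{j-1}(r_j^2-r_{j-1}^2)$ and a manifestly non-positive dissipation piece involving $(\Dp u)^2+(\Dp v)^2$. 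To rewrite the potential piece in the form stated in the lemma, I would apply the integration by parts
\[
  \phi_{j-1}(r_j^2-r_{j-1}^2) = -\int_{r_{j-1}}^{r_j}\!\!(r_j^2-s^2)\,\phi'(s)\,ds + 2\int_{r_{j-1}}^{r_j}\!\! s\,\phi(s)\,ds,
\]
obtained by taking $F(s)=r_j^2-s^2$ (which vanishes at $s=r_j$) and $G(s)=\phi(s)$. The second integral telescopes when summed in $j$ (its antiderivative $\int_0^r 2s\phi(s)\,ds$ is continuous and vanishes at $r=0$, so the partial sums go to $0$ as the range exhausts $\Z$). Because $\phi'\ge 0$, each oriented integral $\int_{r_{j-1}}^{r_j}(r_j^2-s^2)\phi'(s)\,ds$ is non-negative, regardless of which of $r_j,r_{j-1}$ is larger. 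Combining everything yields the identity
\[
  \tfrac{d}{dt}\norm{r_\Dx}_{L^2(\R)}^2 + \sum_j\!\int_{r_{j-1}}^{r_j}\!\!(r_j^2-s^2)\phi'(s)\,ds + \Dx\!\sum_j \phi_{j-1}\Dx\bigl((\Dp u_{j-1})^2+(\Dp v_{j-1})^2\bigr) = 0,
\]
from which the $L^2$ bound follows immediately, and integrating from $0$ to $T$ gives the claimed estimate with $C=\norm{r_\Dx(0)}_{L^2(\R)}^2$, which is controlled by assumption (c) of Section \ref{sec:math} through interpolation $L^1\cap L^\infty\subset L^2$.

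The main technical obstacle I anticipate is the algebraic manipulation that extracts the quantity $\int_{r_{j-1}}^{r_j}(r_j^2-s^2)\phi'(s)\,ds$: without this specific integration by parts, the sum $\sum_j \phi_{j-1}(r_j^2-r_{j-1}^2)$ has no obvious sign and one cannot simultaneously deduce monotonicity of the $L^2$ norm and a useful dissipation bound. A secondary minor obstacle is justifying the telescoping cancellations at $\pm\infty$, which follow from $\ell^2$ decay of the solution provided by the local existence theory in $L^2\times L^2$.
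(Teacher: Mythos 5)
Your proposal is correct and follows essentially the same route as the paper: the paper derives a general discrete entropy identity for $\eta(U)$ and specializes to $\eta=\abs{U}$ (convexity playing the role of your Cauchy--Schwarz step) and $\eta=\abs{U}^2$, arriving at exactly the same dissipation identity with the signed term $\int_{r_{j-1}}^{r_j}(r_j^2-s^2)\phi'(s)\,ds$ and the telescoping flux $g(r)=2\int_0^r s\phi(s)+s^2\phi'(s)\,ds$, which matches your polarization-plus-integration-by-parts computation. The only differences are presentational (direct computation versus the general entropy framework), and your index placement $\phi_{j-1}(D_+u_{j-1})^2$ agrees with the paper's own proof.
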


\begin{proof}
Set $U=(u,v)$ and observe that $r_j = \abs{U_j}$. We can rewrite the system \eqref{eq:discrete} as
\begin{equation}
 \label{eq:rewrite}
\begin{aligned}
 (U_j)_t + \Dmx (U_j \, \phi(r_j)) =0.
\end{aligned}
\end{equation}
Let $\eta = \eta(U)$ be a differentiable function $\eta:\R^2\to \R$, take
the inner product of \eqref{eq:rewrite} with $\nabla_U\eta(U_j)$ to
get 
\begin{multline}
  \label{eq:entropy1}
  \frac{d}{dt} \eta(U_j) + \Dm \left(\phi_j \eta(U_j) \right) \\
  + 
  \left[\left(\nabla_U\eta(U_j),U_j\right)-\eta(U_j)\right]\Dm\phi_j + 
  \phi_{j-1} \frac{\Dx}{2} d^2_U \eta_{j-1/2}\left(\Dm U_j,\Dm U_j\right)=0.
\end{multline}
Here $\phi_j=\phi(r_j)$, and $d^2\eta$ denotes the Hessian matrix of
$\eta$, so that 
\begin{equation*}
  d^2_U\eta_{j-1/2} = d^2_U\eta(U_{j-1/2})
\end{equation*}
for some $U_{j-1/2}$ between $U_j$ and $U_{j-1}$. By a limiting
argument, the function $\eta(U)=\abs{U}$ can be used. This function is
convex, i.e., $d^2_U\abs{U} \ge 0$. This means that
\begin{equation}
  \label{eq:rbnd}
  \frac{d}{dt} r_j + \Dm(r_j\phi(r_j) \le 0.
\end{equation}
Multiplying by $\Dx$ and summing over $j$ we get
\begin{equation}
  \label{eq:l1rbnd}
  \norm{r_\Dx(t)}_{L^1(\R)} \le \norm{\abs{U_0}}_{L^1(\R)}.
\end{equation}
Furthermore, choosing $\eta(U)=\abs{U}^2$ in \eqref{eq:entropy1} we get 
\begin{equation*}
  \frac{d}{dt} r_j^2(t) + \Dm \left(r_j^2\phi_j\right) + r_j^2 \Dm
  \phi_j + \phi_{j-1}\Dx\abs{\Dm U_j}^2 = 0.
\end{equation*}
We have that
\begin{align*}
  \Dm\left(r_j^2\phi_j\right) + r_j^2\Dm\phi_j &=
  \frac{2}{\Dx}\int_{r_{j-1}}^{r_j}\!\!\! s\phi(s) + s^2\phi'(s)\,ds + 
  \frac{1}{\Dx}\int_{r_{j-1}}^{r_j}\!\!\! \left(r_j^2-s^2\right)
  \phi'(s)\,ds\\
  &=\Dm g(r_j) +   \frac{1}{\Dx}\int_{r_{j-1}}^{r_j}\!\!\! \left(r_j^2-s^2\right)
  \phi'(s)\,ds, 
\end{align*}
where 
\begin{equation}\label{eq:gdef}
  g(r)=2\int_0^r s\phi(s)+s^2\phi'(s)\,ds.
\end{equation}
Using this we find that
\begin{equation}
  \label{eq:l2bnd}
  \norm{r_\Dx(t)}_{L^2(\R)} \le \norm{\abs{U_0}}_{L^2(\R)},
\end{equation}
since, by the assumption that $\phi'\ge 0$, 
\begin{equation*}
  \int_{r_{j-1}}^{r_j}\!\!\! \left(r_j^2-s^2\right)\phi'(s)\,ds \ge 0.
\end{equation*}
Hence $\norm{\left(u_\Dx(t),v_\Dx(t)\right)}_{L^2(\R)^2}$ is bounded independently of $\Dx$
and $t$. Therefore, the exists a differentiable solution $(u_\Dx(t), v_\Dx(t))$ to
\eqref{eq:discrete} for all $t>0$. Furthermore, we have the bound 
\begin{equation*}
 \int_0^T \left(\sum_j \int_{r_{j-1}}^{r_j}\!\!\! \left(r_j^2-s^2\right)\phi'(s)\,ds  
  + \Dx\sum_j\phi_{j-1} \Dx \abs{\Dm U_j}^2 \right) \,dt\le C,
\end{equation*}
for some constant $C$ which is independent of $t$ and $\Dx$.
\end{proof}

\begin{lemma}
  \label{lem:linfty} 
  If there is a constant $R$ such that $r_j(0)\le R$ for all $j$, then
  $r_j(t)\le R$ for all $j$ and $t>0$. 

  If $0<u_j(0)$ and $0<v_j(0)$ for all
  $j$, and  there is a constant $C>0$ such that 
  \begin{equation*}
    \frac{1}{C} \le \frac{u_j(0)}{v_j(0)}\le C,
  \end{equation*}
  then 
  \begin{equation*}
    \frac{1}{C} \le \frac{u_j(t)}{v_j(t)}\le C
  \end{equation*}
  for all $j$ and $t>0$.
\end{lemma}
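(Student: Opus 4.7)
The plan is to treat the two claims separately, using different features of the scheme. For the $L^\infty$ bound on $r_j$ I would reuse the scalar sub-solution inequality \eqref{eq:rbnd} derived inside the previous proof, namely $\tfrac{d}{dt}r_j+\Dm(r_j\phi(r_j))\le 0$, together with the monotonicity $F'(r):=(r\phi(r))'=\phi(r)+r\phi'(r)\ge 0$, which is immediate from assumption~(a). With a smooth convex non-decreasing entropy $\eta\ge 0$ vanishing on $(-\infty,R]$ (say an $\eps$-smoothing of $\tfrac12((r-R)_+)^2$) and its compatible flux $\Phi(s)=\int_0^s\eta'(\sigma)F'(\sigma)\,d\sigma$, multiplying the inequality by the non-negative factor $\eta'(r_j)$ and exploiting convexity of $\eta$ combined with $F'\ge 0$ gives the discrete entropy inequality
\begin{equation*}
\frac{d}{dt}\eta(r_j)+\Dm\Phi(r_j)\le 0.
\end{equation*}
Multiplying by $\Dx$ and summing over $j\in\Z$ makes the telescopic flux sum vanish (by the $L^1$ decay of $r_\Dx$ from Lemma~\ref{lem:rl1}), so $\tfrac{d}{dt}\sum_j\Dx\,\eta(r_j)\le 0$; since this sum starts at zero by hypothesis, it stays zero and therefore $r_j(t)\le R$.

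For the ratio bound the key observation is that once $\phi_j=\phi(r_j)$ is viewed as a prescribed non-negative coefficient, the scheme \eqref{eq:discrete} is linear in the unknown, and every fixed linear combination $w_j=\alpha u_j+\beta v_j$ satisfies the same discrete transport $w_j'+\Dm(\phi_j w_j)=0$. I would first establish, as a sub-lemma, positivity preservation for this linear upwind scheme: if $w_j(0)\ge 0$ for all $j$, then $w_j(t)\ge 0$ for all $j$ and all $t>0$. To prove this, writing the equation as $w_j'=-(\phi_j/\Dx)w_j+(\phi_{j-1}/\Dx)w_{j-1}$, I would differentiate $\sum_j\Dx\,w_j^-$ with $w_j^-=\max(-w_j,0)$, shift the index on the convective part, and discard the contribution coming from $w_{j-1}^+$; what remains is non-positive, so the sum stays at its initial value zero. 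The second claim then follows by applying this sub-lemma to the two fields $w_j^{(1)}=Cv_j-u_j$ and $w_j^{(2)}=Cu_j-v_j$, both of which solve the linear scheme and are non-negative at $t=0$ by hypothesis; hence $u_j\le C v_j$ and $v_j\le Cu_j$ for all $t$, i.e.\ $1/C\le u_j/v_j\le C$. The individual positivity $u_j,v_j>0$ needed to interpret the quotient follows from the same sub-lemma applied to $u_j,v_j$ themselves, with strict positivity provided by the Duhamel formula for the scalar ODE that $u_j$ satisfies.

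The main obstacle, as I see it, is purely technical: justifying the summation-by-parts/telescoping and the interchange of $d/dt$ and $\sum_j$ rigorously on the infinite lattice, together with the formal use of $\eta'(r)=(r-R)_+$ and the indicator $\mathds{1}_{w_j<0}$ (both non-smooth, requiring a standard mollification argument). Fortunately the $L^1\cap L^2$ bounds on $r_\Dx$ from Lemma~\ref{lem:rl1} combined with the boundedness of $\phi,\phi'$ from assumption~(b) provide uniform decay and integrability for every quantity in sight, so these manipulations can be made legitimate without any new idea. The conceptual content of the proof really reduces to the two arguments above: an entropy inequality for the scalar $r_j$-equation and positivity preservation for the linearised transport.
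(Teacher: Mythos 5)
Your proposal is correct, but both halves follow a genuinely different route from the paper. For the bound $r_j(t)\le R$ the paper argues by a pointwise maximum principle directly on \eqref{eq:rbnd}: at an index where $r_{j_0}\ge r_{j_0-1}$, monotonicity of $f(r)=r\phi(r)$ forces $r_{j_0}'\le 0$, so local maxima cannot grow. Your entropy route --- multiplying \eqref{eq:rbnd} by $\eta'(r_j)\ge 0$ for a convex nondecreasing $\eta$ supported in $\{r>R\}$, verifying $\eta'(r_j)\Dm f(r_j)\ge \Dm\Phi(r_j)$ from $f'\ge0$ and convexity, and summing --- is equally valid; it trades the paper's (slightly glossed-over) issue of whether $\sup_j r_j$ is attained for the summability and interchange-of-limits issues you flag, all of which are covered by the $L^1\cap L^2$ bounds of Lemma~\ref{lem:rl1}. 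For the ratio bound the difference is more substantial: the paper first proves nonnegativity of $u_j,v_j$ by a contradiction argument at a first zero, then introduces the angle $\varphi_j=\tan^{-1}(v_j/u_j)$, derives the discrete transport equation \eqref{eq:angle} with nonnegative coefficient, and applies a maximum principle to $\varphi_j$. You instead exploit that, with $\phi_j(t)$ frozen as a known coefficient, the scheme is linear in $(u,v)$, so $Cv_j-u_j$ and $Cu_j-v_j$ solve the same upwind transport scheme and positivity preservation (which you prove correctly via $\sum_j\Dx\,w_j^-$, or could get directly from the Duhamel representation $w_j(t)=e^{-\int_0^t\phi_j/\Dx}w_j(0)+\int_0^t e^{-\int_s^t\phi_j/\Dx}\tfrac{\phi_{j-1}}{\Dx}w_{j-1}\,ds$) immediately yields $1/C\le u_j/v_j\le C$. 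This is cleaner and more elementary for the lemma as stated; the only thing it buys less of is that the paper's angle variable and equation \eqref{eq:angle} are set up here precisely so they can be reused for the BV estimate in Lemma~\ref{lem:anglconv}, so in the paper's architecture that detour is not wasted.
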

\begin{proof}
  If $j_0$ is such that $r_{j_0}(t_0)\ge r_{j_0-1}(t_0)$,
  then $D_-f(r_{j_0}(t_0))\ge 0$ with $f(r)=r \phi(r)$, since $f$ is non-decreasing. Hence,
  from \eqref{eq:rbnd}, we see that $r_{j_0}'(t_0)\le 0$. This proves
  the first statement of the lemma. 

  To prove the second statement, we first show that if $u_j(0)>0$,
  then $u_j(t)\ge 0$, and if $u_{j_0}(t_0)=0$ for some $t_0>0$ and
  $j_0$, then $u_j(t)=0$ for all $j\le j_0$ and all $t\ge t_0$. A
  similar statement holds for $v_j$. To see this, note that
  \begin{equation*}
    u_j' + u_j D_-\phi_j + \phi_{j-1}D_-u_j = 0.
  \end{equation*}
  Assume that for some $t_0$ and $j_0$, $u_{j_0}(t_0)=0$ and
  $u_{j_0}(t)\ge 0$ for
  $t<t_0$. Then $u_{j_0}'(t_0)\le 0$. If $u_{j_0-1}(t_0)>0$, this leads to a
  contradiction, hence $u_{j_0-1}(t_0)=0$. By repeating the argument
  we get that  that $u_j(t_0)=0$ for all $j<j_0$. If both $u_{j}$ and
  $u_{j-1}$ are zero, then $u_j'(t)=0$, hence if $u_j(t_0)=0$,
  $u_j(t)=0$ for all $t>t_0$. A similar statement holds for
  $v_j$. This means that if $r_{j_0}(t_0)=0$, both $u_{j_0}(t_0)$ and
  $v_{j_0}(t_0)$ are  zero, hence $r_{j}(t)=0$ for $j\le j_0$ and
  $t\ge t_0$.

  Let for the moment $\varphi_j$ be defined by 
  \begin{equation}\label{eq:angledef}
    \varphi_j = \begin{cases} \tan^{-1}\left(\frac{v_j}{u_j}\right)
      &\text{if $r_j>0$,}\\
      \varphi_{j+1} &\text{if $r_j=0$}.
    \end{cases}
  \end{equation}
  By the previous observation, we know that $0\le \varphi_j \le
  \pi/2$. Now if $r_j>0$,
  \begin{align*}
    \varphi_j'(t) &= \frac{1}{1+(v_j/u_j)^2}
    \left(\frac{v_j}{u_j}\right)' =-\frac{u_ju_{j-1}\phi_{j-1}}{r_j^2} D_-\left(\frac{v_j}{u_j}\right).
  \end{align*}
  Therefore $\varphi_j$ satisfies the equation
  \begin{equation}
    \label{eq:angle}
    \varphi_j' + \frac{u_j u_{j-1}\phi_{j-1}}{r_j^2}
    D_-\left(\tan\left(\varphi_j\right)\right) = 0.
  \end{equation}
  This equation holds for any $j$ where $r_j>0$, if $r_{j_0}=0$ for
  some $j_0$, then we define $\varphi_{j}(t)=\varphi_{j_0+1}(t)$ for
  all $j\le j_0$. 
  
  We have that  $\tan$ is an increasing function, and
  $(u_ju_{j-1}\phi_{j-1})/r_j^2 \ge 0$ if $r_j>0$. Therefore, if
  $\varphi_j(t)>\varphi_{j-1}(t)$, then $\varphi_j'(t)\le 0$. Similarly if
  $\varphi_j(t)< \varphi_{j-1}(t)$, then $\varphi_j'(t)\ge 0$. The
  assumption on the initial data implies that
  \begin{equation*}
    0<\inf_{j}\varphi_j(0) \le \varphi_j(t) \le \sup_j \varphi_j(0)<\pi/2.
  \end{equation*}
  Incidentally, this shows that if $u_j(t)=0$, then $v_j(t)=0$ and
  vice versa. 
\end{proof}

Let now $\eta_i(r)$ and $q_i(r)$ be given by \eqref{eq:entropies} for $i=1$,
$2$. We then have that 
\begin{equation}
  \label{eq:eta1}
  \frac{d}{dt}\eta_1(r_j) + \Dm (q_1(r_j)) + e_{1,j} = 0,
\end{equation}
where 
\begin{align*}
  f(r)&=r\phi(r),\ \ \   q_1(r)=f(r)-f(k)\ \ \text{and } \\ 
  e_{1,j}&= \phi_{j-1} \Dx \left(\Dm U_j\right)^T \frac{1}{r_{j-1/2}}
  \left(I-\frac{U_{j-1/2}\otimes
      U_{j-1/2}}{r_{j-1/2}^2}\right)\left(\Dm U_j\right).
\end{align*}
We shall now find an equation satisfied by $\eta_2$. It is easy to see that
\begin{equation*}
  \frac{d}{dt} f(r_j) + q_2'(r_j) \Dm r_j 
  - f'(r_j)\frac{\Dx}{2}
  f''\left(r_{j-1/2}\right) \left(\Dm r_j\right)^2 + f'(r_j)e_{1,j}=0.
\end{equation*}
Set 
\begin{equation*}
  e_{2,j}=\frac{\Dx}{2}
  f''\left(r_{j-1/2}\right) \left(\Dm r_j\right)^2.
\end{equation*}
This can be rewritten as 
\begin{equation}
 \label{eq:eta2}
\begin{aligned}
  \frac{d}{dt}\eta_2(r_j) + \Dm q_2(r_j) + \frac{\Dx}{2}
  q_2''(r_{j-1/2}) \left(\Dm r_j\right)^2 - f'(r_j) \left(e_{2,j} -
    e_{1,j}\right) = 0.
\end{aligned}
\end{equation}
Finally set 
\begin{equation*}
  e_{3,j} =  \frac{\Dx}{2}
  q_2''(r_{j-1/2}) \left(\Dm r_j\right)^2,
\end{equation*}
and 
\begin{equation*}
  e_{i}(x,t)=e_{i,j}(t)\ \text{ for $x\in (x_{j-1/2},x_{j+1/2}]$ and $i=1,2,3$.}
\end{equation*}
\begin{lemma}\label{lem:ei}
  We have that $e_i\in \CMloc(\Pt)$ for $i=1$, $2$, $3$. 
\end{lemma}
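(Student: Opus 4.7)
The plan is to show that each $e_i$ is uniformly bounded in total variation on $\Pt$, i.e.\ that there is a constant $C$, independent of $\Dx$, with
\begin{equation*}
 \int_0^T \Dx \sum_j \abs{e_{i,j}(t)}\,dt \le C \qquad (i=1,2,3).
\end{equation*}
This immediately yields $e_i\in\CMloc(\Pt)$; in fact each $e_i$ will be a finite Radon measure on $\Pt$.

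For $e_1$ the crucial observation is that $e_{1,j}\ge 0$, because the matrix $I-U_{j-1/2}\otimes U_{j-1/2}/r_{j-1/2}^2$ is the orthogonal projector onto $U_{j-1/2}^{\perp}$, and the remaining prefactors $\phi_{j-1}$, $\Dx$, $1/r_{j-1/2}$ are positive. The naive pointwise bound $e_{1,j}\le \phi_{j-1}\Dx\abs{\Dm U_j}^2/r_{j-1/2}$ is of no use because $r_{j-1/2}$ may be arbitrarily small; this is the main obstacle I foresee. I would sidestep it by exploiting the conservation form \eqref{eq:eta1}. Taking $k=0$, so that $q_1(r)=r\phi(r)$ with $q_1(0)=0$, I would multiply \eqref{eq:eta1} by $\Dx$ and sum over $j\in\Z$. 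The discrete-flux term telescopes, and since $r_\Dx(t)\in L^1(\R)$ by Lemma~\ref{lem:rl1}, the values $r_j(t)$ tend to $0$ as $\abs{j}\to\infty$, so the boundary contributions vanish. What remains is
\begin{equation*}
 \tfrac{d}{dt}\norm{r_\Dx(t)}_{L^1(\R)} + \Dx \sum_j e_{1,j}(t) = 0,
\end{equation*}
and integrating on $[0,T]$ yields
\begin{equation*}
 \int_0^T \Dx \sum_j e_{1,j}(t)\,dt = \norm{r_\Dx(0)}_{L^1(\R)} - \norm{r_\Dx(T)}_{L^1(\R)} \le \norm{\abs{U_0}}_{L^1(\R)},
\end{equation*}
which is finite by assumption (c).

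For $e_2$ and $e_3$ a direct estimate will suffice, using the energy bound in Lemma~\ref{lem:rl1}. Since $f(r)=r\phi(r)$ and $q_2'(r)=(f'(r))^2$, one has $f''=2\phi'+r\phi''$ and $q_2''=2f'f''$, both uniformly bounded on the relevant range of $r$ by assumption (b) on $\phi$; hence there is a constant $C_0$ with $\abs{e_{i,j}}\le C_0\,\Dx\,(\Dm r_j)^2$ for $i=2,3$. The reverse triangle inequality $\abs{r_j-r_{j-1}}\le \abs{U_j-U_{j-1}}$ gives $(\Dm r_j)^2 \le \abs{\Dm U_j}^2$, and since $\phi\ge \phi(0)>0$ by assumption (a), I would bound
\begin{equation*}
 \int_0^T \Dx\sum_j \bigl(\abs{e_{2,j}}+\abs{e_{3,j}}\bigr)\,dt \le \frac{2C_0}{\phi(0)}\int_0^T \Dx\sum_j \phi_{j-1}\,\Dx \abs{\Dm U_j}^2\,dt \le C',
\end{equation*}
the last inequality coming from Lemma~\ref{lem:rl1}. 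This completes the verification that $e_i\in\CMloc(\Pt)$ for $i=1,2,3$.
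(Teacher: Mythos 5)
Your proof is correct, and it is actually more complete than the one in the paper. For $e_2$ and $e_3$ you do exactly what the paper does: both are of the form $\frac{\Dx}{2}(\text{bounded coefficient})(\Dm r_j)^2$, and the bound $\int_0^T \Dx\sum_j \Dx\abs{\Dm r_j}^2\,dt \le C$ (which follows from the dissipation estimate of Lemma~\ref{lem:rl1} via $\abs{\Dm r_j}\le\abs{\Dm U_j}$ and $\phi\ge\phi(0)>0$) gives the uniform total-variation bound. The interesting difference is your treatment of $e_1$. The paper's stated justification only invokes the quadratic dissipation bound, which does \emph{not} obviously control $e_{1,j}$ because of the factor $1/r_{j-1/2}$, which can be arbitrarily large where $r$ is small; you correctly identify this obstacle. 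Your fix --- use $e_{1,j}\ge 0$ (projection onto $U_{j-1/2}^{\perp}$) together with the exact identity \eqref{eq:eta1} with $k=0$, sum, telescope the flux, and read off $\int_0^T\Dx\sum_j e_{1,j}\,dt \le \norm{r_\Dx(0)}_{L^1(\R)}$ --- is clean and is really just making explicit the dissipation hidden in the $L^1$ contraction \eqref{eq:rbnd}--\eqref{eq:l1rbnd} of Lemma~\ref{lem:rl1}. The only point worth being slightly more careful about is the telescoping of $\sum_j\Dm q_1(r_j)$: you need $q_1(r_j)\to 0$ as $\abs{j}\to\infty$ for each fixed $t$, which you correctly get from $r_\Dx(t)\in \ell^1$ (so $r_j\to 0$) and $q_1(0)=0$, and then an integrability justification for differentiating the sum under the integral; these are routine. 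In short: same ingredients as the paper for $i=2,3$, and a genuinely sharper (and needed) argument for $i=1$.
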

\begin{proof}
  The result follows since $\phi(r)>0$ and
  \begin{equation*}
    \int_0^T \Dx\sum_j \Dx\abs{\Dm r_j}^2 \le C,
  \end{equation*}
\end{proof}

\begin{lemma}
 \label{lem:compact}
Let $(u_{\Dx}, v_{\Dx})$ be generated by the scheme \eqref{eq:discrete} and let $r_{\Dx}$ be defined by $r_{\Dx} = \sqrt{u_{\Dx}^2 + v_{\Dx}^2}$. Then 
\begin{align*}
\seq{\eta_i(r_\Dx)_t + q_i(r_\Dx)}_{\Dx>0} \,\, \text{is compact in} \,\, H^{-1}_{\loc}(\Pt),
\end{align*}
where $\eta_i$ and $q_i$ are given by \eqref{eq:entropies}. 
\end{lemma}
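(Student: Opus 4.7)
The plan is to invoke Murat's lemma (Lemma~\ref{lem:Murat}) for $\CL_\Dx^i := \partial_t \eta_i(r_\Dx) + \partial_x q_i(r_\Dx)$ on bounded open subsets of $\Pt$. Three ingredients are needed: a uniform bound in $W^{-1,\infty}$, a decomposition into a $\CMloc$-bounded piece and an $H^{-1}_{\loc}$-compact piece. The $W^{-1,\infty}$ bound is automatic: by Lemma~\ref{lem:linfty}, $\sup_j r_j(t) \le R$ uniformly in $\Dx$ and $t$, and continuity of $\eta_i,q_i$ on $[0,R]$ makes $\eta_i(r_\Dx), q_i(r_\Dx)$ uniformly bounded in $L^\infty(\Pt)$, hence their distributional derivatives lie in a bounded subset of $W^{-1,\infty}$.

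The core of the proof is a discrete-to-continuous translation of \eqref{eq:eta1} and \eqref{eq:eta2}. Since $r_\Dx$ is piecewise constant, direct computation gives $\partial_t \eta_i(r_\Dx) = \sum_j \eta_i(r_j)'(t)\,\mathds{1}_{I_j}$ and $\partial_x q_i(r_\Dx) = \sum_j [q_i(r_j)-q_i(r_{j-1})]\,\delta_{x_{j-1/2}}$. Substituting $\eta_i(r_j)' = -\Dm q_i(r_j) + \text{source}_{i,j}$ from the discrete identities and rewriting $\Dm q_i(r_j) \mathds{1}_{I_j}$ as $[q_i(r_j)-q_i(r_{j-1})]\mathds{1}_{I_j}/\Dx$, I obtain
\begin{equation*}
  \CL_\Dx^i = S_i^\Dx + \partial_x Q_\Dx^i,
\end{equation*}
where $S_1^\Dx = -e_1$, $S_2^\Dx = -e_3 + f'(r_\Dx)(e_2 - e_1)$, and $Q_\Dx^i := \sum_j [q_i(r_j)-q_i(r_{j-1})]\,H_j(x)$ with $H_j$ the piecewise-linear hat supported in $I_j$ satisfying $H_j'=\delta_{x_{j-1/2}} - \mathds{1}_{I_j}/\Dx$ and $\|H_j\|_{L^2}\sim\sqrt{\Dx}$. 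The boundedness of $S_i^\Dx$ in $\CMloc(\Pt)$ is immediate from Lemma~\ref{lem:ei}, together with the uniform bound on $f'(r_\Dx)$ (since $f'=\phi+r\phi'$ is bounded on $[0,R]$).

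It remains to show $\partial_x Q_\Dx^i \to 0$ in $H^{-1}_{\loc}(\Pt)$, which as a convergent sequence is precompact there. For this it suffices that $Q_\Dx^i \to 0$ in $L^2_{\loc}$. Since $q_i$ is Lipschitz on $[0,R]$ (note $q_2' = (f')^2$ is bounded), disjoint supports of the $H_j$'s give
\begin{equation*}
  \|Q_\Dx^i\|_{L^2(\Pt)}^2 \lesssim \Dx \int_0^T \sum_j (r_j-r_{j-1})^2 \,dt = \Dx \int_0^T \Dx^2 \sum_j (\Dm r_j)^2 \,dt.
\end{equation*}
Because $\phi$ attains a positive minimum $c$ on $[0,R]$ and $|\Dm r_j| \le |\Dm U_j|$ (as $r_j=|U_j|$), Lemma~\ref{lem:rl1} yields $\Dx^2 \int_0^T \sum_j |\Dm r_j|^2\,dt \le C/c$, so $\|Q_\Dx^i\|_{L^2}^2 = O(\Dx)$. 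Murat's lemma then gives the claim. The main obstacle is not any sharp estimate but the clean bookkeeping needed to justify the splitting $\CL_\Dx^i = S_i^\Dx + \partial_x Q_\Dx^i$, i.e., identifying the hat $H_j$ as a primitive of the difference between the point-mass and cell-average representations of the same discrete flux increment.
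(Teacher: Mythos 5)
Your proof is correct and follows essentially the same route as the paper: Murat's lemma applied to the splitting of $\eta_i(r_\Dx)_t+q_i(r_\Dx)_x$ into the discrete entropy-dissipation terms (bounded in $\CMloc$ by Lemma~\ref{lem:ei}) plus an $O(\sqrt{\Dx})$ remainder that is compact in $H^{-1}$, with the same underlying bound $\int_0^T\Dx^2\sum_j\abs{\Dm r_j}^2\,dt\le C$ from Lemma~\ref{lem:rl1}. The only difference is presentational: you realize the remainder explicitly as $\partial_x Q^i_\Dx$ with $Q^i_\Dx$ small in $L^2$, whereas the paper obtains the identical estimate in duality against a test function $\psi\in H^1$.
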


\begin{proof}
Let $i=1$ or $i=2$, and $\psi$ is a test function in $H^1_{\mathrm{loc}}(\Pt)$. we define
\begin{align*}
\langle \mathcal{L}_{i}, \psi \rangle & = \langle \eta_i(r_\Dx)_t + q_i(r_{\Dx})_x , \psi\rangle \\
  &=  \int_0^T \sum_j \jint \frac{d}{dt}\eta_i(r_j) \psi(x,t) + 
  \Dm q_i(r_j) \psi(x_{j-1/2},t)\,dx\, dt\\
  &=\int_0^T \sum_j \jint \left(\frac{d}{dt}\eta_i(r_j) + \Dm q_i(r_j)
  \right)\psi(x,t)\,dx\,dt \\
  &\qquad + \int_0^T \sum_j \jint
  \left(\psi(x_{j-1/2},t)-\psi(x,t)\right)\Dm q_i(r_j) \,dx\,dt\\
  &=: \langle \mathcal{L}_{i,1},\psi\rangle + \langle
  \mathcal{L}_{2,i},\psi\rangle.
\end{align*}
By \eqref{eq:eta1}, \eqref{eq:eta2} and Lemma~\ref{lem:ei} we know
that $\mathcal{L}_{i,1}\in \CMloc(\Omega)$. Regarding
$\mathcal{L}_{i,2}$ we have
\begin{align*}
  \abs{\langle
    \mathcal{L}_{2,i},\psi\rangle} &= \Bigl|
  \int_0^T \sum_j \jint \int_{x_{j-1/2}}^x \psi_x(y,t)\,dy\, \Dm
  q_i(r_j)\,dx\,dt \Bigr|\\
  &\le \int_0^T \sum_j \Dx^{3/2} \Bigl(
  \jint  \left(\psi_x(x,t)\right)^2\,dx\Bigr)^{1/2}
  \norm{q_i'}_{L^\infty} \abs{\Dm r_j} \,dt \\
  &\le C \sqrt{\Dx} \norm{\psi}_{H^1(\Pt)}.
\end{align*}
Therefore the above estimate shows that $\mathcal{L}_{2,i}$ is compact in
$H^{-1}(\Pt)$. By Lemma~\ref{lem:Murat}, we conclude the sequence 
$\seq{\eta_i(r_\Dx)_t + q_i(r_\Dx)}_{\Dx>0}$ is compact in
$H^{-1}_{\mathrm{loc}}(\Pt)$. 
\end{proof}

\begin{lemma}
  \label{lem:rconverg} If 
  \begin{equation*}
    \mathrm{meas}\seq{ r \,\Bigm|\, 2\phi'(r) + r\phi''(r) = 0} = 0,
  \end{equation*}
  then there is a subsequence of $\seq{\Dx}$ and a function $r$
  such that $r_{\Dx} \to r$ a.e.~$(x,t)\in \Pt$. We have that $r\in
  L^{\infty}([0,T];L^1(\R))$.
\end{lemma}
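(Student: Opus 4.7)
The plan is to apply Theorem~\ref{thm:compcomp} to the sequence $\seq{r_\Dx}$ with the choice $f(r) := r\phi(r)$, and to read off the $L^\infty_t L^1_x$ bound from the $L^1$ contraction already established in Lemma~\ref{lem:rl1}.

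First I would verify the three hypotheses of Theorem~\ref{thm:compcomp}. For the uniform $L^\infty$ bound, assumption (c) gives $R := \norm{\sqrt{u_0^2+v_0^2}}_{L^\infty(\R)}<\infty$, and Jensen's inequality applied to the cell averages \eqref{eq:discrete_init} yields $r_j(0)\le R$ for every $j$; Lemma~\ref{lem:linfty} then propagates this bound to every $t>0$, so $\norm{r_\Dx}_{L^\infty(\Pt)}\le R$ independently of $\Dx$. The $H^{-1}_{\loc}(\Pt)$-compactness of the two entropy productions associated with \eqref{eq:entropies} is precisely what Lemma~\ref{lem:compact} provides. For the non-degeneracy condition, a short computation gives
\[
  f'(r) = \phi(r)+r\phi'(r),\qquad f''(r) = 2\phi'(r)+r\phi''(r),
\]
so the hypothesis of the present lemma is exactly $\mathrm{meas}\seq{r\mid f''(r)=0}=0$, the condition needed in Theorem~\ref{thm:compcomp}(b).

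With the hypotheses in hand, I would pick a bounded open $\Omega\subset \Pt$ and use Banach--Alaoglu to extract a subsequence (still denoted $\Dx$) along which $r_\Dx \weakstarto r$ and $f(r_\Dx)\weakstarto \bar f$ in $L^\infty(\Omega)$. Theorem~\ref{thm:compcomp} then gives $\bar f = f(r)$ and $r_\Dx \to r$ a.e.~in $\Omega$. A standard diagonal extraction over an exhausting sequence $\Omega_n \nearrow \Pt$ of bounded open sets yields a further subsequence converging a.e.~on all of $\Pt$, which after relabelling is the claimed subsequence.

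For the $L^\infty_t L^1_x$ bound, Lemma~\ref{lem:rl1} gives $\norm{r_\Dx(t)}_{L^1(\R)}\le \norm{r_\Dx(0)}_{L^1(\R)}\le \norm{\sqrt{u_0^2+v_0^2}}_{L^1(\R)}$ uniformly in $\Dx$ and $t$. By Fubini, a.e.~convergence on $\Pt$ along the chosen subsequence implies that for a.e.~$t\in[0,T]$ one has $r_\Dx(\cdot,t)\to r(\cdot,t)$ a.e.~in $x$; Fatou's lemma in $x$ then bounds $\norm{r(\cdot,t)}_{L^1(\R)}$ by the same constant, giving $r\in L^\infty([0,T];L^1(\R))$. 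The real obstacle is entirely upstream in Lemma~\ref{lem:compact}; here everything reduces to a packaging of already-prepared ingredients, with only the diagonal exhaustion and the $\Dx$-uniform $L^\infty$-propagation requiring any care at all.
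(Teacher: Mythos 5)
Your proof is correct and follows the same route as the paper, which simply invokes Theorem~\ref{thm:compcomp} together with Lemma~\ref{lem:compact}; you have merely spelled out the verification of the hypotheses (the uniform $L^\infty$ bound from Lemma~\ref{lem:linfty}, the identification $f(r)=r\phi(r)$ so that $f''=2\phi'+r\phi''$, and the Fatou argument for the $L^1$ bound) that the paper leaves implicit.
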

\begin{proof}
  The strong convergence of $r_\Dx$ follows from the compensated
  compactness theorem, Theorem~\ref{thm:compcomp} and the compactness
  of $\seq{\eta_i(r_\Dx)_t + q_i(r_\Dx)_x}_{\Dx>0}$ for $i=1,2$. 
\end{proof}
\begin{lemma}
  \label{lem:anglconv}
  If $r_j(0)>0$ and there is a positive constant $C$ such that $1/C
  \le (v_j(0)/u_j(0)) \le C$, and
  \begin{equation*}
    \abs{\frac{v_0}{u_0}}_{B.V.}<\infty,
  \end{equation*}
  then there is a subsequence of $\seq{\Dx}$ and a function
  $\varphi\in C([0,T];L^1_{\mathrm{loc}}(\R))$ such that
  $\varphi_{\Dx}(\cdot,t)\to \varphi(\cdot,t)$ in
  $L^1_{\mathrm{loc}}(\R)$ as $\Dx \to 0$.
\end{lemma}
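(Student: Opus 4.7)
The plan is to recast \eqref{eq:angle} as a semi-discrete upwind advection equation for $\varphi_j$ with non-negative velocity, derive a spatial BV bound that is uniform in $\Dx$ and $t$, upgrade this to $L^1$-Lipschitz continuity in time, and then apply a Helly/Kolmogorov--Riesz type selection argument to extract the desired convergent subsequence.

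For the BV bound, I would use the mean value theorem to write $\Dm(\tan\varphi_j) = \sec^2(\tilde\varphi_{j-1/2})\,\Dm\varphi_j$, where $\tilde\varphi_{j-1/2}$ lies between $\varphi_{j-1}$ and $\varphi_j$. Then \eqref{eq:angle} becomes
\begin{equation*}
\varphi_j' + c_{j-1/2}\,\Dm\varphi_j = 0, \qquad c_{j-1/2} = \frac{u_j u_{j-1}\phi_{j-1}}{r_j^2}\sec^2(\tilde\varphi_{j-1/2}) \ge 0.
\end{equation*}
Setting $z_j = \Dm\varphi_j$ and differentiating in time yields $z_j' = \Dx^{-1}(-c_{j-1/2}z_j + c_{j-3/2}z_{j-1})$; multiplying by $\sgn{z_j}$, using $\sgn{z_j}\,z_{j-1} \le \abs{z_{j-1}}$, and summing over $j$ produces the telescoping contraction $\tfrac{d}{dt}\sum_j\abs{z_j(t)}\le 0$, hence $\abs{\varphi_\Dx(t)}_{BV(\R)} \le \abs{\varphi_\Dx(0)}_{BV(\R)}$. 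The right-hand side is controlled by $C\,\abs{v_0/u_0}_{BV(\R)}$, since $\arctan$ is Lipschitz and the averaging in \eqref{eq:discrete_init} does not increase BV.

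For time continuity, Lemma~\ref{lem:linfty} keeps $\varphi_j$ strictly inside $(0,\pi/2)$, so $\sec^2(\tilde\varphi_{j-1/2})$ is uniformly bounded; combined with a uniform upper bound on $u_j u_{j-1}/r_j^2$, this yields $c_{j-1/2}\le C$, whence
\begin{equation*}
\sum_j \Dx\,\abs{\varphi_j(t)-\varphi_j(s)} \le \int_s^t \sum_j \Dx\,c_{j-1/2}\abs{\Dm\varphi_j}\,d\tau \le C\,\abs{\varphi_\Dx(0)}_{BV}\,\abs{t-s}.
\end{equation*}
The uniform $L^\infty$ bound, the uniform spatial BV bound, and the resulting $L^1$-equicontinuity in time allow Helly's selection theorem (applied at each rational $t$) together with a diagonal argument to produce a subsequence converging in $L^1_\loc(\R)$ for every $t$, with limit belonging to $C([0,T];L^1_\loc(\R))$.

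The main obstacle I expect is establishing the uniform upper bound on $c_{j-1/2}$. Writing $u_j u_{j-1}/r_j^2 = \cos\varphi_j\cos\varphi_{j-1}(r_{j-1}/r_j)$ shows that the difficulty reduces to keeping $r_j$ bounded away from $0$. The assumption $r_j(0)>0$ combined with the propagation argument in Lemma~\ref{lem:linfty} yields pointwise positivity $r_j>0$, but a \emph{quantitative} lower bound $r_j(t)\ge\varepsilon>0$ on the relevant compact set, uniformly in $\Dx$, must still be extracted from $r_j' + \Dm(r_j\phi(r_j))\le 0$, presumably via a discrete comparison principle against an explicit decaying subsolution. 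This is the delicate step in turning the outline above into a rigorous proof.
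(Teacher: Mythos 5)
Your proposal follows essentially the same route as the paper: rewrite \eqref{eq:angle} via the mean value theorem as an upwind advection equation $\varphi_j'+c_{j-1/2}\Dm\varphi_j=0$ with $c_{j-1/2}\ge 0$, derive the telescoping contraction $\frac{d}{dt}\sum_j\abs{\Dm\varphi_j}\le 0$ for the spatial variation, bound the initial variation by that of $v_0/u_0$, and combine the uniform BV bound with $L^1$-equicontinuity in time, Helly's theorem and a diagonal argument. The only cosmetic difference is that the paper regularizes $\abs{\cdot}$ by a convex $\eta_\alpha$ before passing to the limit, whereas you multiply by $\sgn{\Dm\varphi_j}$ directly; these are the same computation.

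The obstacle you flag at the end --- a uniform upper bound on $c_{j-1/2}$ --- is a genuine issue, but note two things. First, it is needed only for the time-equicontinuity estimate, not for the BV contraction, which uses only $c_{j-1/2}\ge 0$. Second, the paper does not resolve it either: it asserts that $\mu_j$ is bounded ``since $\varphi_j<\pi/2$'', which (together with Lemma~\ref{lem:linfty}, keeping $\varphi_j$ in a compact subinterval of $(0,\pi/2)$) controls the $\cos^{-2}(\varphi_{j-1/2})$ factor but says nothing about $u_ju_{j-1}/r_j^2=\cos\varphi_j\cos\varphi_{j-1}\,(r_{j-1}/r_j)$, exactly the factor you isolate. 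Since $r_\Dx(0)\in L^1(\R)$ forces $\inf_j r_j(0)=0$, and the scheme only provides the one-sided inequality $r_j'+\Dm(r_j\phi(r_j))\le 0$ (an upper, not a lower, maximum principle), a uniform positive lower bound on $r_j$ on compact sets does not follow from the stated hypotheses; some additional argument (e.g.\ a quantitative lower barrier as you suggest, or an assumption $r_0\ge\delta>0$ together with a discrete comparison argument) is required. So your writeup is faithful to the paper's proof, and the step you could not close is precisely the step the paper glosses over.
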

\begin{proof}
  For $j$ such that $r_j>0$, by \eqref{eq:angle}
  \begin{equation*}
    \frac{d}{dt}\varphi_j +
    \frac{u_ju_{j-1}\phi_{j-1}}{r_j^2 \cos^2\left(\varphi_{j-1/2}\right)} 
    \Dm\varphi_j = 0,
  \end{equation*}
  where $\varphi_{j-1/2}$ is some intermediate value. Set 
  \begin{equation*}
    \mu_j =
    \frac{u_ju_{j-1}\phi_{j-1}}{r_j^2 \cos^2\left(\varphi_{j-1/2}\right)} 
    \text{ and }\ \theta_j = \Dm \varphi_j.
  \end{equation*}
  Note that $\mu_j\ge 0$, and that $\mu_j$ is bounded since $\varphi_j<\pi/2$.
  Then $\theta_j$ satisfies
  \begin{equation}\label{eq:thetadef}
    \frac{d}{dt}\theta_j + \mu_{j-1}\Dm\theta_j + \theta_j \Dm \mu_{j-1}=0.
  \end{equation}
  Let $\eta_\alpha(\theta)$ be a smooth approximation to
  $\abs{\theta}$ such that 
  \begin{equation*}
    \eta''_\alpha(\theta)\ge 0 \ \text{ and }\ \lim_{\alpha\to
      0}\eta_\alpha(\theta) = \lim_{\alpha\to
      0}\left(\theta\eta'_\alpha(\theta)\right)=\abs{\theta}.
  \end{equation*}
  We multiply \eqref{eq:thetadef} by $\eta'_\alpha(\theta_j)$ to get
  an equation satisfied by $\eta_\alpha(\theta_j)$. Observe that 
  \begin{align*}
    \mu_{j-1}\eta_\alpha'(\theta_j)\Dm\theta_j & +
    \theta_j\eta_\alpha'(\theta_j)\Dm \mu_j \\
&= \mu_{j-1}\Dm\eta_\alpha(\theta_j) +\theta_j\eta'_\alpha(\theta_j)
    \Dm \mu_j + \frac{\Dx}{2}\mu_{j-1}
    \eta''_\alpha(\theta_{j-1/2})\left(\Dm\theta_j\right)^2 \\
    &\ge 
    \Dm\left(\mu_j\eta_\alpha(\theta_j)\right) +
    \left(\theta_j\eta'_\alpha(\theta_j)-\eta_\alpha(\theta_j)\right)\Dm \mu_j.
  \end{align*}
  Hence
  \begin{equation*}
    \frac{d}{dt}\eta_\alpha(\theta_j) +
    \Dm\left(\mu_j\eta_\alpha(\theta_j)\right) \le
    \left(\eta_\alpha(\theta_j)-\theta_j\eta'_\alpha(\theta_j)\right)\Dm \mu_j.
  \end{equation*}
  Now let $\alpha\to 0$ to obtain
  \begin{equation}
    \label{eq:abstheta}
    \frac{d}{dt}\abs{\theta_j} + \Dm\left(\mu_j\abs{\theta_j}\right)
    \le 0.
  \end{equation}
  If we multiply this with $\Dx$, sum over $j$ and integrate in $t$,
  we find that 
  \begin{equation*}
    \abs{\varphi_\Dx(\cdot,t)}_{B.V} \le
    \abs{\varphi_{\Dx}(\cdot,0)}_{B.V.} \le \abs{\varphi(\cdot,0)}_{B.V.}<\infty.
  \end{equation*}
  By Helly's theorem, for each $t\in [0,T]$, the sequence
  $\seq{\varphi_{\Dx}(\cdot,t)}_{\Dx>0}$ has a subsequence which
  converges strongly in $L^1_{\mathrm{loc}}(\R)$. By using a diagonal
  argument, we can get this convergence for a dense countable set
  $\seq{t_k}_{k\in \N}\subset [0,T]$. Since $\varphi_{\Dx}(\cdot,t)$
  has bounded variation, it is $L^1_{\mathrm{loc}}$ Lipschitz
  continuous in $t$, that is
  \begin{equation*}
    \norm{\varphi_{\Dx}(\cdot,t)-\varphi_\Dx(\cdot,s)}_{L^1_{\mathrm{loc}}(\R)}
    \le \sup_j\mu_j 
    \abs{\varphi(\cdot,0)}_{B.V.} \abs{t-s}.
  \end{equation*}
  This means that $\varphi_\Dx(\cdot,t)$ converges also for $[0,T]\ni
  t\not\in\seq{t_k}_{k\in \N}$. Furthermore, it also shows that
  $\varphi=\lim_{\Dx\to 0}\varphi_{\Dx}$ is continuous in $t$, with
  values in $L^1_{\mathrm{loc}}(\R)$. 
\end{proof}
Now we have the strong convergence of $r_\Dx$ and of
$\varphi_\Dx$. This means that also $u_\Dx$ and $v_\Dx$ converge
strongly to some functions $u$ and $v$ in $L^{\infty}([0,T];L^1_{\mathrm{loc}}(\R))$ since we have 
\begin{equation}
  \label{eq:trigonometry}
  u_{\Dx}=r_\Dx \cos(\varphi_\Dx) \ \text{ and }\ 
  v_{\Dx}=r_\Dx \sin(\varphi_\Dx).
\end{equation}
\begin{theorem}
  \label{thm:convergence} Let $\phi$ be a twice differentiable
  function $\phi:[0,\infty)\to [0,\infty)$ such that $\phi(r)>0$ and
  $\phi'(r)\ge 0$, and 
  \begin{equation*}
    \mathrm{meas}\seq{ r\,\bigm|\, 2\phi'(r)+ r\phi''(r)=0} = 0.
  \end{equation*}
  Let $u_\Dx$ and $v_\Dx$ be defined by
  \eqref{eq:discrete}--\eqref{eq:discrete_init}. If $u_0>0$, $v_0>0$  and
  $u_0^2+v_0^2\in L^1(\R)$, and $\abs{v_0/u_0}_{B.V.}<\infty$ then
  there exists functions $u$ and $v$ in
  $L^{\infty}([0,T];L^1_{\mathrm{loc}}(\R))$ such that $u_\Dx\to u$ and
  $v_\Dx\to v$ as $\Dx\to 0$. The functions $u$ and $v$ are weak
  solutions to \eqref{eq:system}.
\end{theorem}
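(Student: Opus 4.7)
The plan is to combine the strong convergences already established for $r_\Dx$ and $\varphi_\Dx$ to obtain strong convergence of $u_\Dx$ and $v_\Dx$, and then pass to the limit in a suitably tested version of the discrete scheme. The convergence of $u_\Dx$ and $v_\Dx$ follows directly from \eqref{eq:trigonometry}: Lemma~\ref{lem:rconverg} gives $r_\Dx\to r$ a.e.\ on $\Pt$, Lemma~\ref{lem:anglconv} gives $\varphi_\Dx\to\varphi$ in $C([0,T];L^1_{\mathrm{loc}}(\R))$, and after extracting a further subsequence I may assume $\varphi_\Dx\to\varphi$ a.e. Hence $u_\Dx=r_\Dx\cos\varphi_\Dx$ and $v_\Dx=r_\Dx\sin\varphi_\Dx$ converge a.e.\ to $r\cos\varphi$ and $r\sin\varphi$; the uniform $L^\infty$ bound on $r_\Dx$ supplied by Lemma~\ref{lem:linfty} then allows me to invoke dominated convergence on compact subsets of $\Pt$, giving $u_\Dx\to u$ and $v_\Dx\to v$ strongly in $L^p_{\mathrm{loc}}(\Pt)$ for every $p<\infty$, and similarly $\phi(r_\Dx)u_\Dx\to\phi(r)u$, $\phi(r_\Dx)v_\Dx\to\phi(r)v$.

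To verify the weak formulation, I would fix a test function $\psi\in C_c^\infty(\Pt)$ (extended to vanish for $t$ near $T$) and set $\psi_j(t)=\psi(x_j,t)$. Multiplying the first equation in \eqref{eq:discrete} by $\Dx\,\psi_j(t)$, summing over $j\in\Z$, and integrating in $t\in[0,T]$ produces
\begin{equation*}
\int_0^T \Dx\sum_{j} u_j'(t)\psi_j(t)\,dt + \int_0^T \Dx\sum_j D_-\bigl(\phi(r_j)u_j\bigr)\psi_j(t)\,dt = 0.
\end{equation*}
A discrete summation by parts in $j$ transfers $D_-$ onto $\psi_j$ with a sign, producing $-D_+\psi_j$ tested against $\phi(r_j)u_j$; an integration by parts in $t$ transfers $\partial_t$ onto $\psi_j$ and leaves a boundary contribution at $t=0$ which, by \eqref{eq:discrete_init} and the integrability of $u_0$, converges to $\int_\R u_0(x)\psi(x,0)\,dx$. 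Since $\psi$ is smooth and compactly supported, the piecewise constant interpolants of $\psi(x_j,t)$, $\psi_t(x_j,t)$ and $D_+\psi(x_j,t)$ converge uniformly on $\supp\psi$ to $\psi$, $\psi_t$ and $\psi_x$, respectively. Combining this with the strong $L^1_{\mathrm{loc}}$ convergence of $u_\Dx$ and of $\phi(r_\Dx)u_\Dx$, each Riemann-type sum passes to its integral counterpart and I obtain the first identity of \eqref{eq:weaksys_2}; the identical argument applied to the $v_j$ equation yields the second.

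The main technical point I expect to require care is the upgrade from the a.e.\ convergence of $r_\Dx$ and the $L^1_{\mathrm{loc}}$ convergence of $\varphi_\Dx$ to strong convergence of the nonlinear flux $\phi(r_\Dx)u_\Dx$ in a space strong enough to identify the limit. This rests on the uniform $L^\infty$ bound of $r_\Dx$ provided by Lemma~\ref{lem:linfty}, after which dominated convergence on compact subsets closes the argument. The remaining steps, namely the summation-by-parts manipulations, the passage of the initial term via the cell-average definition \eqref{eq:discrete_init}, and the uniform approximation of $\psi$, $\psi_t$ and $\psi_x$ by their discrete interpolants, are essentially a standard Lax--Wendroff-type consistency argument adapted to this semi-discrete upwind scheme, and should present no difficulty beyond careful bookkeeping.
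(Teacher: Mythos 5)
Your proposal is correct and follows essentially the same route as the paper: convergence of $u_\Dx$ and $v_\Dx$ is obtained from the already-established strong convergence of $r_\Dx$ and $\varphi_\Dx$ via \eqref{eq:trigonometry}, and the weak formulation is verified by the same summation-by-parts (Lax--Wendroff consistency) argument, which the paper states only in compressed form. Your version merely supplies more of the bookkeeping (the a.e.\ subsequence extraction, dominated convergence, and the treatment of the $t=0$ boundary term) that the paper leaves implicit.
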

\begin{proof}
  We have already established convergence. It remains to show that $u$ and $v$ are weak solutions. To this end,
  observe that\footnote{Here we ``extend'' the definition of $D_-$ and
    $D_+$ to arbitrary functions in the obvious manner.}
  \begin{equation*}
    \int_0^T \int_\R \Dm\left(u_\Dx\phi(r_\Dx)\right) \psi(x,t)\,dxdt
    = -  \int_0^T \int_\R u_\Dx\phi(r_\Dx) D_+\psi(x,t)\,dxdt.
  \end{equation*}
  As $\Dx\to 0$, $D_+\psi\to \psi_x$ for any $\psi \in
  C^1_0(\Omega)$. This means that $u$ is a weak solution. Similarly we can show that $v$ is a weak solution. Hence, the functions $u$ and $v$ are weak
  solutions to \eqref{eq:system}.
\end{proof}

\section*{Acknowledgments}

The first author acknowledges support from the {\bf Alexander von Humboldt Foundation}, through a Humboldt Research Fellowship for postdoctoral researchers.

\medskip
Received xxxx 20xx; revised xxxx 20xx.
\medskip


\begin{thebibliography}{99}


\bibitem{Chen} G. -Q. Chen,  \newblock Compactness methods and nonlinear hyperbolic conservation laws.
\newblock {\em
   In some current topics on nonlinear conservation laws}, pp. 33-75. Amer. Math. Soc., Providence, RI, 2000.



\bibitem{KeyfitzKranzer}  B. L. Keyfitz, and H. C. Kranzer,  \newblock A system of nonstrictly hyperbolic conservation laws.
\newblock {\em
   Arch. Rational Mech. Anal.}, 72(1979/80), pp. 219-241.




\bibitem{Murat} F. Murat,  \newblock Compacite par compensation.
\newblock {\em
  Ann. Scuola Norm. Sup. Pisa Cl. Sci (4)}, 5(3): 489-507, 1978.



\bibitem{Lu} L. Yunguang,  \newblock Hyperbolic conservation laws and the compensated compactness method.
\newblock {\em
   Vol 128 of Chapman and Hall/CRC Monographs and surveys in Pure and Applied Mathematics}, Chapman and Hall/CRC, Boca Raton, FL, 2003.















\end{thebibliography}
\end{document}